\numberwithin{equation}{section}
\theoremstyle{plain}
\newtheorem{thm}{Theorem}[section]
\newtheorem{prop}[thm]{Proposition}
\newtheorem{lem}[thm]{Lemma}
\theoremstyle{definition}
\theoremstyle{remark}
\newtheorem{rem}[thm]{Remark}
\newcommand{\R}{\mathbb{R}}
\newcommand\N{{\mathbb N}}
\newcommand\pref[1]{(\ref{#1})}
\let \eps\varepsilon
\DeclareMathOperator{\spt}{spt}
\DeclareMathOperator{\id}{id}
\newcommand\dist{\mathop{\mathrm{dist}}\nolimits}
\newcommand\dive{\mathrm{div}}
\newcommand\Lip{\mathrm{Lip}}
\def\<#1,#2>{\left<#1,#2\right>}
\newcommand\tal{{\widetilde {\alpha}}}
\newcommand\tgam{{\widetilde {\gamma}}}
\newcommand\tmu{{\widetilde {\mu}}}
\newcommand{\Prob}{\mathscr{P}}
\newcommand{\dd}{\mathrm{d}}
\newcommand{\OT}{\mathrm{OT}}
\newcommand\vv{\boldsymbol{v}}
\newcommand\Lom{\mathscr{L}_\Omega}
\def\al{\alpha}
\def\be{\beta}
\def\ga{\gamma}
\def\ta{\tau}
\newcommand{\chg}[1]{%
\ifthenelse{\boolean{colorText}}{\textcolor{blue}{#1}}{#1}%
}
\title{Well-posedness and convergence of entropic approximation of semi-geostrophic equations}
\author{Guillaume Carlier\thanks{CEREMADE, UMR CNRS 7534, Universit\'e Paris
Dauphine, PSL, Pl. de Lattre de Tassigny, 75775 Paris Cedex 16, FRANCE and INRIA-Paris, MOKAPLAN,
\texttt{carlier@ceremade.dauphine.fr}},
Hugo Malamut\thanks{CEREMADE, UMR CNRS 7534, Universit\'e Paris
Dauphine, PSL, Pl. de Lattre de Tassigny, 75775 Paris Cedex 16, FRANCE and INRIA-Paris, MOKAPLAN,
\texttt{hugo.malamut@ens.psl.eu}}
}
\begin{document}

\maketitle

\begin{abstract}
We prove existence and uniqueness of solutions for an entropic version of the semi-geostrophic equations. We also establish convergence to a weak solution of the semi-geostrophic equations as the entropic parameter vanishes. Convergence is also proved for discretizations that can be computed numerically in practice as shown recently in \cite{BCM23}.
\end{abstract}

\textbf{Keywords:} optimal transport, entropic optimal transport, semi-geo-
strophic equations. 

\smallskip

\textbf{MS Classification:} 49Q22, 35D30, 35Q35.

 \section{Introduction}

 The semi-geostrophic equations are used in meteorology to describe frontogenesis for large scale atmospheric flows. Initially proposed by Eliassen \cite{Eliassen} in the late 1940s and subsequently developed by Hoskins \cite{Hoskins} and Cullen, see \cite{Cullenbook0}, \cite{Cullenbook}, \cite{CullenPurser},  they have gained a lot of attention by mathematicians because of their connections with optimal transport theory. The seminal work of Brenier \cite{Brenier}  indeed enabled Benamou and Brenier \cite{BenamouBrenier98} to define a notion of weak solutions and  establish the existence of such solutions. This  was later generalized by Loeper \cite{Loeper} in the framework of measure-valued solutions (also see Feldman and Tudorascu \cite{FT1, FT2}).  It is also worth mentioning here that Ambrosio and Gangbo \cite{AmbrosioGangbo} developed a general and self-contained theory of Hamiltonian systems in the Wasserstein space which contains semi-geostrophic equations as a special case. Regarding stronger notions of solutions, thanks to the deep regularity theory for the Monge-Amp\`ere equation, Ambrosio,  Colombo, De Philippis and  Figalli were able to obtain Eulerian solutions in the two-dimensional periodic case \cite{FigalliColombo2012} and for convex three-dimensional domains \cite{FigalliColombo2014}. 
 
 \smallskip

 Given $\Omega$ a Lipschitz bounded open subset of $\R^3$,  and $\alpha_0$ a Borel measure on $\R^3$ with total mass $\vert \Omega\vert$ the semi-geostrophic system reads as the coupling of the continuity equation
 \begin{equation}\label{sg1}
 \partial_t \alpha + \dive(\alpha J (\id-\nabla \psi))=0,  \; \alpha(0, \cdot)=\alpha_0, \mbox{ with } J:= \left(\begin{array}{ccc }
0 & -1 & 0 \\
1 & 0 & 0   \\
0 & 0 & 0
\end{array}\right)
\end{equation}
with the Monge-Amp\`ere equation
 \begin{equation}\label{sg2}
 \det(D^2 \psi_t) = \alpha_t, \; \psi_t \mbox{ convex.}
 \end{equation}
 Before going further, let us briefly explain the origin of \eqref{sg1}-\eqref{sg2},  we refer to \cite{Cullenbook0}, \cite{Cullenbook}  \cite{CullenPurser} and the references therein for a modern and complete exposition and a derivation from incompressible Euler equations with a large Coriolis force. The starting point is the following system
 \[\left\{
\begin{array}{rr}
D_t \vv^g + \nabla p =-J \vv+ \theta e_3, \; \dive (\vv)=0, \;  \mbox{ in } (0,\infty)\times \Omega\\
 \vv^g = J \nabla p,  \; D_t \theta =0, \;  \mbox{ in } (0,\infty)\times \Omega,\\
 \vv \cdot \nu_{\Omega}=0,  \mbox{ on } (0,\infty)\times \partial\Omega,\;  p(0, \cdot)=p_0,
\end{array}
\right.\]
where $D_t=\partial_t + \vv \cdot \nabla$ denotes the convective derivative, $e_3=(0,0,1)^T$ is the vertical direction and $\nu_{\Omega}$ denotes the outward unit normal to $\partial \Omega$. Note that the first equation enables one to relate the pressure $p$ to the \emph{buoyancy} $\theta$ through the simple relation $\partial_3 p=\theta$ and that the so-called semi-geostrophic balance equation  $ \vv^g = J \nabla p$ imposes that the \emph{semi-geostrophic velocity field} $\vv^g$ is horizontal. Cullen's stability criterion \cite{CullenPurser} further  requires that
\[P_t(x)=p_t(x)+\frac{1}{2}(x_1^2+ x_2^2)\]
is convex. Substituting  $\vv^g= J \nabla p$ in the first equation of the system yields
\[D_t \nabla P= J \nabla p=J(\nabla P-\id).\]
Let us then denote by $(t,x)\mapsto X_t(x)$ the flow of $\vv$.  This flow remains  in the physical domain $\Omega$  (since $\vv$ is tangential to $\partial \Omega$) and is  measure preserving, i.e. ${X_t}_\# \Lom=\Lom$ where $\Lom$ denotes the Lebesgue measure on $\Omega$ (since $\dive(\vv)=0$).  At least formally, we have
\[\begin{split}
\partial_t \nabla P_t(X_t(x))&=J(\nabla P_t(X_t(x))-X_t(x))\\
&= J(\id -\nabla P_t^*) ( \nabla P_t(X_t(x)))\end{split}\]
where $P_t^*$ is the Legendre transform of $P_t$.  So if we set $\alpha_t:={\nabla P_t}_\# \Lom$ and $\psi_t:= P_t^*$ (so that $\nabla \psi_t=(\nabla P_t)^{-1}$ and  $\Lom={\nabla \psi_t}_\# \alpha_t$ if $\nabla P_t$ is invertible and smooth), the semi-geostrophic system reads as the coupling of the continuity equation \eqref{sg1} (with initial condition $\alpha_0={\nabla P_0}_\# \Lom$) with the Monge-Amp\`ere equation \eqref{sg2}. Note that \eqref{sg1}-\eqref{sg2} is a reformulation of the initial semi-geostrophic system in \emph{semi-geostrophic} coordinates i.e. after the change of variables $x\mapsto \nabla P_t(x)$. Conversely, note that (again formally) from a solution of \eqref{sg1}-\eqref{sg2} one recovers $\nabla P=\nabla \psi^*$, $\theta$ by $\theta=\partial_3 p=\partial_3 P$, $\vv^g= J \nabla p$, and the velocity $\vv$ by
\[\vv_t(x)=\partial_t \nabla \psi_t (\nabla P_t(x)) + (D^2 \psi_t( \nabla P_t(x))) J(\nabla P_t(x)-x).\] 
In general,  $\psi_t$ and $P_t$ may fail to be smooth or strongly convex  so that the previous computations only remain  formal and one has to resort to some notion of weak solution, following the seminal optimal transport approach of  Benamou-Brenier \cite{BenamouBrenier98} (which will be recalled in section \ref{sec-entreg}) based on the fact that $\nabla P_t$ is the optimal transport from $\Lom$ to $\alpha_t$. 

\smallskip 
 
 The present paper is motivated by recent research on optimal transport methods to numerically solve semi-geostrophic equations.  On the one hand, in the spirit of the geometric approach pioneered by Cullen and Purser \cite{CullenPurser}, Bourne, Egan, Pelloni and Wilkinson  proposed  a semi-discrete optimal transport strategy \cite{Bourne22} that was implemented in \cite{BourneJCP}. On the other hand, Benamou, Cotter and Malamut \cite{BCM23} developed and implemented an alternative method based on entropic approximation of optimal transport and the popular Sinkhorn algorithm (see \cite{Cuturi}, \cite{CuturiPeyre}). In \cite{Bourne22}, the authors have established well-posedness of semi-discrete approximations (for suitably discretized initial conditions) as well as their convergence. Concerning entropic approximation of semi-geostrophic equations,  well-posedness and convergence issues are not addressed in \cite{BCM23}. The purpose of our paper is precisely to study these two questions.  The paper is organized as follows. In section \ref{sec-entreg}, we recall fundamentals from optimal transport theory, the weak formulation of semi-geostrophic equations and introduce their entropic approximations with regularization parameter $\eps>0$. We establish well-posedness of these approximations  for every $\eps>0$ in section \ref{sec-wp}. Section \ref{sec-convergence} is devoted to convergence as $\eps \to 0^+$, including the convergence analysis of the explicit discretization scheme recently proposed  in \cite{BCM23}. 
 
 \section{Optimal and entropic optimal transport for semi-geostrophic equations}\label{sec-entreg}

{\textbf{Notations}} We shall denote by $\Prob(\R^d)$ the set of Borel probability measures on $\R^d$, by $\Prob_c(\R^d)$ the set of compactly supported  Borel probability measures on $\R^d$, for $R>0$, $B_R$ denotes the closed ball of radius $R$ of $\R^d$, centered at $0$ and $\Prob(B_R)$ the set of Borel probability measures supported on $B_R$. For $x\in \R^d$, we denote by $\vert x \vert$ the euclidean norm of $x$ and if  $A$ is a $d\times d$ matrix, $\Vert A \Vert$ stands for the euclidean operator norm of $A$. 
 
\smallskip
 
 {\textbf{Optimal transport (OT) and weak solutions of the semi-geostrophic system}} Given $R>0$, $\alpha$ and $\mu$ in $\Prob(B_R)$, denoting by $\Pi(\alpha, \mu)$ the set of transport plans between $\alpha$ and $\mu$ i.e. the set of  Borel probability measures on $\R^d\times \R^d$ having $\alpha$ and $\mu$ as marginals, the squared-2-Wasserstein distance between $\alpha$ and $\mu$ is defined as the value of the quadratic OT problem
 \begin{equation}\label{W2}
 W_2^2(\alpha, \mu):=\inf_{\gamma \in \Pi(\alpha, \mu)}  \int_{\R^d\times \R^d} \vert x-y \vert^2 \gamma (\dd x, \dd y).
 \end{equation}
It is a classical result of Brenier \cite{Brenier} that, as soon as $\mu$ is absolutely continuous with respect to the Lebesgue measure, \pref{W2} admits a unique optimal plan $\gamma$ which is characterized by $\gamma=(\nabla \varphi, \id)_\# \mu$ i.e. 
\[ \int_{\R^d\times \R^d} f(x, y) \gamma (\dd x, \dd y)= \int_{\R^d} f(\nabla \varphi(y), y) \mu (\dd y), \; \forall f\in C_c(\R^d\times \R^d)\]
with $\varphi$ convex (and $\nabla \varphi$ unique up to $\mu$-negligible sets). Note in particular that $\nabla \varphi_\# \mu=\alpha$ and defining $\psi$ as the Legendre transform of $\varphi$:
\[\psi(x):=\sup_{y \in B_R} \{x \cdot y-\varphi(y)\}, \; \forall x\in \R^d\]
the optimal plan $\gamma$ is supported by pairs $(x,y)$ for which $y\in \partial \psi(x)$ (where $\partial \psi(x)$ denotes the subdifferential of the convex function $\psi$ at $x$, see \cite{Rock}). In particular, if we disintegrate $\gamma$ with respect to its first marginal $\alpha$:
\[ \gamma(\dd x, \dd y ) =\alpha(\dd x) \gamma(\dd y \vert x)\]
so that $\gamma(. \vert x)$ is a probability measure on $B_{R}$, its conditional expectation
\[\int_{\R^d} y \gamma(\dd y \vert x)\]
belongs to $\partial \psi(x)$ by convexity of $\partial \psi(x)$.
 Note also that $\psi$ can be seen, at least formally, as a weak solution of the Monge-Amp\`ere equation
\[\det(D^2 \psi) \mu(\nabla \psi)=\alpha.\]
Let us now fix $R_0>0$, $\mu_0 \in \Prob(B_{R_0})$, $\alpha_0$ in $\Prob(B_{R_0})$; let $A$ be a $d\times d$ matrix and $T>0$ be a time-horizon. We are interested in the following evolution problem for a curve of probability measures $\alpha$: $t\in [0, T] \mapsto \alpha_t \in \Prob_c(\R^d)$:
\begin{equation}\label{sg3}
 \partial_t \alpha + \dive(\alpha A (\id-\nabla \psi))=0,  \; \alpha(0,\cdot)=\alpha_0, \det(D^2 \psi_t) \mu_0(\nabla \psi_t) = \alpha_t, \; \psi_t \mbox{ convex.}
\end{equation}
Of course, for $d=3$, $\mu_0$ the uniform probability measure on $\Omega$ and $A= J$, we recover the initial problem \pref{sg1}-\pref{sg2} after normalizing all measures by dividing them by $\vert \Omega\vert$. In view of the previous discussion, it is natural to define weak solutions of \pref{sg3} as follows. A weak solution of \pref{sg3} on $[0,T]\times \R^d$, is a continuous for $W_2$ curve of probability measures $t\in [0,T] \mapsto \alpha_t \in \Prob_c(\R^d)$ such that:
\begin{itemize}
\item  $\spt(\alpha_t) \subset B_{ 2 R_0 e^{\Vert A \Vert T}}$ for every $t\in [0,T]$ (which by  Gr\"onwall's lemma, is consistent with the linear  growth of the velocity field: $\vert A(x-\nabla \psi(x)) \vert \leq \Vert A \Vert (\vert x \vert+ R_0)$ and the fact that the initial condition $\alpha_0$ is supported in $B_{R_0}$),

\item for every $f \in C^1([0,T]\times \R^d)$, one has 
\begin{multline}\label{sgweak1}
\int_0^T \int_{\R^d} [\partial_t f + Ax \cdot \nabla f]\alpha_t(\dd x) \dd t-\int_0^T \int_{\R^d\times \R^d} A y \cdot \nabla f(t,x) \gamma_t(\dd x, \dd y) \dd t\\
=  \int_{\R^d}  f(T,x) \alpha_T (\dd x)- \int_{\R^d} f(0, x) \alpha_0(\dd x),
\end{multline}
where $\gamma_t$ is an optimal plan between $\alpha_t$ and $\mu_0$ i.e. $\gamma_t \in \Pi(\alpha_t, \mu_0)$ and 
\begin{equation}\label{sgweak2}
W_2^2(\alpha_t, \mu_0) =\int_{\R^d \times \R^d} \vert x-y \vert^2 \gamma_t(\dd x, \dd y), \mbox{ 
for a.e. $t\in [0,T].$} 
\end{equation}

\end{itemize}
Note that we have not imposed that $\mu_0$ is absolutely continuous with respect to the Lebesgue measure here (so that there may not exist optimal transport maps between $\mu_0$ and $\alpha_t$ which is the reason the weak formulation above has been written in terms of-possibly non unique-optimal plans).

 \smallskip
 
{\textbf{Entropic OT and entropic semi-geostrophic equations}} Given $\eps>0$, $\alpha$ and $\mu$ in $\Prob_c(\R^d)$, the entropic regularization of the quadratic OT problem \pref{W2} reads 
\begin{equation}\label{ote}
\OT_\eps(\alpha, \mu):=\inf_{\gamma \in \Pi(\alpha, \mu)} \Big\{ \frac{1}{2} \int_{\R^d\times \R^d} \vert x-y \vert^2 \gamma (\dd x, \dd y) + \eps H(\gamma \vert \alpha \otimes \mu)\Big\}
\end{equation} 
 where $H(p\vert q)$ denotes the relative entropy of $p$ with respect to $q$:
 \[H(p\vert q)=\begin{cases} \int_{\R^d} \log\Big( \frac{\dd p }{ \dd q}(x)\Big) p(\dd x) \mbox{ if $p \ll q$,} \\ + \infty \mbox{ otherwise.}\end{cases}\]
 There exists a unique optimal plan $\gamma^\eps$ for $\OT_\eps(\alpha, \mu)$ and it is well-known that $\gamma^\eps$ has the Gibbs form
 \begin{equation}
 \gamma^{\eps}(\dd x , \dd y)=\exp\Big( - \frac{\vert x-y\vert^2}{2 \eps} + \frac{u^\eps(y)+ v^\eps(x)}{\eps}  \Big) \alpha(\dd x )  \mu (\dd y)
 \end{equation}
 where the  potentials $u^\eps$ and $v^\eps$ are such that $\gamma^\eps \in \Pi(\alpha, \mu)$ i.e. satisfy the so-called Schr\"odinger system
  \begin{equation}\label{schro1}
  v^\eps(x)=-\eps \log \Big( \int_{\R^d} e^{-\frac{\vert x-y\vert^2}{2 \eps} +  \frac{u^\eps(y) }{\eps} }  \mu (\dd y)  \Big)  \mbox{ for $\alpha$-a.e. $x$,}
  \end{equation}
and
  \begin{equation}\label{schro2}
  u^\eps(y)=-\eps \log \Big( \int_{\R^d} e^{-\frac{\vert x-y\vert^2}{2 \eps} +  \frac{v^\eps(x) }{\eps} }  \alpha (\dd x)  \Big)  \mbox{ for $\mu$-a.e. $y$.}
  \end{equation}
  These potentials are called Schr\"odinger potentials for the entropic optimal transport problem between $\alpha$ and $\mu$. For an overview of entropic optimal transport, we refer the reader to \cite{Leonard14},  the lecture notes \cite{Nutz22} and the rerences therein. 
 Note that formulas \pref{schro1} and \pref{schro2} can be extended to define smooth maps on the whole of $\R^d$ (which are defined uniquely up to additive constants) and that disintegrating the optimal entropic plan $\gamma^\eps$
 as 
 \[ \gamma^\eps(\dd x, \dd y ) =\alpha(\dd x) \gamma^\eps(\dd y \vert x)\]
 one has for every $x\in \R^d$
 \begin{equation}\label{gradve}
 \nabla v^\eps(x)=\frac{  \int_{\R^d} (x-y) e^{-\frac{\vert x-y\vert^2}{2 \eps} +  \frac{u^\eps(y) }{\eps} }  \mu (\dd y)  }  {   \int_{\R^d} e^{-\frac{\vert x-y\vert^2}{2 \eps} +  \frac{u^\eps(y) }{\eps} }  \mu (\dd y)  } = x-\int_{\R^d} y \gamma^\eps(\dd y \vert x)
 \end{equation}
 and
  \begin{equation}\label{hessve}
 D^2 v^\eps (x)= \id-\eps^{-1} \Big(\int_{\R^d} y\otimes y \gamma^\eps(\dd y \vert x)  -\int_{\R^d} y \gamma^\eps(\dd y \vert x) \otimes \int_{\R^d} y \gamma^\eps(\dd y \vert x) \Big). 
 \end{equation}

Before defining the entropic regularization of the semi-geostrophic system, let us recall  two results that will be useful in the following. The first result follows from  Proposition 3.1 of \cite{CPT} which gives an error estimate of the form $\OT_\eps- \frac{1}{2} W_2^2=O(\eps \vert \log(\eps)\vert)$  or the (more precise) results from \cite{MalamutSylvestre} where  an $O(\eps)$ suboptimality estimate for entropic plans is established. A non quantitative consequence of these results can be stated as:
 \begin{prop}
  \label{prop:uniform}
      Let $R>0$ and for $\alpha,\beta \in \Prob(B_R)^2$, denote by $\gamma^\eps_{\al,\be}$ the solution of the entropic optimal transport problem between $\alpha$ and $\beta$. Then
       $\int_{\R^d\times\R^d} |x-y|^2 \; \ga_{\al,\be}^\eps(\dd x, \dd y)$ converges to $W_2^2(\alpha,\beta)$ uniformly in $\alpha,\beta \in \Prob(B_R)^2$ as $\eps \to 0$.  
  \end{prop}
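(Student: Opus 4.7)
The plan is to sandwich the quantity $I_\eps(\alpha,\beta) := \int_{\R^d\times\R^d} |x-y|^2 \, \ga^\eps_{\al,\be}(\dd x, \dd y)$ between $W_2^2(\alpha,\beta)$ and $W_2^2(\alpha,\beta) + o(1)$, with the $o(1)$ uniform in $\alpha,\beta \in \Prob(B_R)$. The lower bound is free: since $\ga^\eps_{\al,\be} \in \Pi(\alpha,\beta)$, the definition of $W_2^2$ as an infimum over $\Pi(\alpha,\beta)$ immediately gives $I_\eps(\alpha,\beta) \geq W_2^2(\alpha,\beta)$.

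For the upper bound, I would use that the relative entropy of two probability measures is nonnegative (by Jensen's inequality applied to $t\log t$), so the optimality identity
\[ \tfrac{1}{2} I_\eps(\alpha,\beta) + \eps H(\ga^\eps_{\al,\be} \vert \alpha\otimes\beta) = \OT_\eps(\alpha,\beta) \]
yields $\tfrac{1}{2} I_\eps(\alpha,\beta) \leq \OT_\eps(\alpha,\beta)$. It then suffices to invoke the quantitative comparison $\OT_\eps(\alpha,\beta) - \tfrac{1}{2} W_2^2(\alpha,\beta) \leq C\,\eps \vert \log \eps \vert$ of Proposition 3.1 in \cite{CPT} (or the sharper $O(\eps)$ bound of \cite{MalamutSylvestre}). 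Combined with the lower bound, this gives
\[ 0 \leq I_\eps(\alpha,\beta) - W_2^2(\alpha,\beta) \leq 2C\,\eps \vert \log \eps \vert, \]
which is precisely the desired uniform convergence as $\eps \to 0^+$.

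The only point requiring care is that the constant $C$ in the cited quantitative bound be uniform over $\alpha,\beta \in \Prob(B_R)$, since the statement imposes neither regularity nor lower bounds on the marginals. This is the step I expect to be the main obstacle, but inspecting the block-approximation competitor used in \cite{CPT} shows that both its excess quadratic cost and its entropy relative to $\alpha\otimes\beta$ are controlled purely in terms of the block sidelength, the diameter $R$ of the supports, and the ambient dimension $d$, without any finer property of $\alpha$ or $\beta$. Hence $C = C(R,d)$ is genuinely uniform in the marginals, which closes the argument.
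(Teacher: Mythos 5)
Your proposal is correct and takes essentially the same route the paper gestures at: the paper dispatches Proposition~\ref{prop:uniform} by simply citing Proposition~3.1 of \cite{CPT} (the $O(\eps|\log\eps|)$ bound on $\OT_\eps-\tfrac12 W_2^2$) and \cite{MalamutSylvestre}, without writing out the deduction. Your elaboration --- the trivial lower bound from $\gamma^\eps_{\al,\be}\in\Pi(\alpha,\beta)$, the upper bound $\tfrac12 I_\eps\le\OT_\eps$ from nonnegativity of relative entropy, and the observation that the block-approximation constant in \cite{CPT} depends only on $R$ and $d$ --- fills in that deduction correctly and addresses the one genuine point of care (uniformity of the constant over $\Prob(B_R)^2$).
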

  The second result concerns the stability of the Schr\"odinger potentials with respect to the marginals\footnote{It is worth mentioning   that the constant $\Delta(\eps, R)$ in \eqref{displacementsmooth} derived  from the analysis of \cite{CCL} scales exponentially badly with $\eps$. Interestingly, in the very recent paper \cite{divol2024tight}  close (but not obviously applicable to our context) results are obtained  with much better constants  of the order of $R^2 \eps^{-1}$.} and follows from Theorem 1.1 of \cite{CCL}:
  \begin{prop}
  \label{prop:stability}
  Let $R>0$ and $\mu_0 \in \Prob(B_R)$. For $\al \in \Prob(B_R)$, let $v^\eps[\al]$ be the Schr\"odinger potential for the entropic optimal transport problem between $\al$ and $\mu_0$. Then, there is a  constant $\Delta(\eps, R)$ depending on $\eps$ and $R$ such that for every $\al_1,\al_2 \in \Prob(B_R)^2$, we have 
  \begin{equation}\label{displacementsmooth}
   \|\nabla v^\eps[\al_1] - \nabla v^\eps[\al_2]\|_{L^\infty(B_R)} \leq \Delta(\eps, R) W_2(\al_1,\al_2).
   \end{equation}
  \end{prop}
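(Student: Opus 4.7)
The plan is essentially to invoke Theorem 1.1 of \cite{CCL} directly, after checking that its hypotheses are met and that its conclusion, which is stated for the Schr\"odinger potentials, can be upgraded to a bound on the gradients $\nabla v^\eps$.

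First, I would recall the representation formula \pref{gradve}, which expresses
\[\nabla v^\eps[\al](x)=x-\int_{\R^d} y \, \gamma^\eps_{\al,\mu_0}(\dd y\vert x),\]
and note that the integrand on the right-hand side is a smooth function of $x$ on all of $\R^d$ (both factors of the kernel depending smoothly on $x$). In particular, $\nabla v^\eps[\al]$ extends to a smooth function on $\R^d$, and the only $\al$-dependence enters through the potentials solving the Schr\"odinger system \pref{schro1}-\pref{schro2}. Thus the stability question reduces to a quantitative stability for $(u^\eps[\al],v^\eps[\al])$ with respect to $\al$.

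Next, since both $\al_1, \al_2$ and $\mu_0$ are supported in the compact set $B_R$, Theorem 1.1 of \cite{CCL} applies and gives Lipschitz stability of the Schr\"odinger potentials with respect to Wasserstein perturbations of the marginals. In our setting the second marginal $\mu_0$ is held fixed and only the first marginal $\al$ varies, so this yields an estimate of the form $\|u^\eps[\al_1]-u^\eps[\al_2]\|_{L^\infty(B_R)}\le C(\eps,R)\,W_2(\al_1,\al_2)$. To pass from the potentials to their gradients, I would differentiate the Schr\"odinger identity
\[\nabla v^\eps[\al](x)=\frac{\int (x-y) e^{-|x-y|^2/(2\eps)+u^\eps[\al](y)/\eps}\mu_0(\dd y)}{\int e^{-|x-y|^2/(2\eps)+u^\eps[\al](y)/\eps}\mu_0(\dd y)}\]
in $u^\eps[\al]$: the right-hand side depends on $u^\eps[\al]$ only through integrals against smooth kernels bounded uniformly on $B_R$, so an $L^\infty$ bound on $u^\eps[\al_1]-u^\eps[\al_2]$ converts into an $L^\infty(B_R)$ bound on $\nabla v^\eps[\al_1]-\nabla v^\eps[\al_2]$, up to a new constant $\Delta(\eps,R)$ still depending only on $\eps$ and $R$.

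The main (and essentially only) obstacle is really packaged inside \cite{CCL}: controlling how much the dual potentials move when the marginals are perturbed, via a contraction-type argument for the Sinkhorn map in Hilbert's projective metric or an equivalent stability principle. For our purposes we do not need to optimize the dependence on $\eps$ (as indicated in the footnote, the constant $\Delta(\eps,R)$ is allowed to blow up exponentially in $\eps$), so a direct citation of \cite{CCL} followed by the elementary differentiation argument above is sufficient; the sharper scaling of \cite{divol2024tight} is not required here.
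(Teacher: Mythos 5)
Your proof is correct and takes essentially the same approach as the paper, namely a direct appeal to Theorem 1.1 of \cite{CCL}: the paper simply presents Proposition \ref{prop:stability} as a consequence of that theorem, without further elaboration. The intermediate step you add (bootstrapping from an $L^\infty$ bound on $u^\eps[\al_1]-u^\eps[\al_2]$ to an $L^\infty(B_R)$ bound on $\nabla v^\eps[\al_1]-\nabla v^\eps[\al_2]$) is a harmless cushion; depending on the exact formulation of \cite{CCL}'s Theorem 1.1 (which in fact controls the potentials in a $C^1$-type norm) it may be redundant, but it is a valid argument. One small imprecision: the dependence of $\nabla v^\eps[\al]$ on $u^\eps[\al]$ is through an exponential, not a linear functional, so ``integrals against bounded kernels'' is not by itself the reason the conversion works. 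The cleaner justification is that the G\^ateaux derivative of the map $u \mapsto \nabla v^\eps$ in a direction $h$ is $-\eps^{-1}\mathrm{Cov}_{\gamma^\eps(\cdot\vert x)}\bigl(y, h(y)\bigr)$, which is bounded in operator norm by $O(R/\eps)$ on $B_R$, giving the desired Lipschitz bound; this also makes transparent that the additive normalization ambiguity in $u^\eps$ (which you do not address explicitly) is immaterial, since constants have zero covariance.
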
 
  
 Given  $T>0$, $R_0>0$, $\mu_0$ and $\alpha_0$  in $\Prob(B_{R_0})$, we now consider the following entropic regularization with parameter $\eps>0$ of \pref{sg3} 
  \begin{equation}\label{sgeps1}
 \partial_t \alpha^\eps + \dive(\alpha^\eps A (\nabla v^\eps))=0 \mbox{ in $[0,T]\times \R^d$},  \; \alpha^\eps(0,\cdot )=\alpha_0, 
 \end{equation}
 where 
 \begin{equation}\label{sgeps2}
 \nabla v^\eps_t(x)=  x-\int_{\R^d} y \gamma^\eps_t(\dd y \vert x)
 \end{equation}
 and $\gamma^\eps_t$ is the solution of $\OT^\eps(\alpha_t^\eps, \mu_0)$:
 \begin{equation}\label{sgeps3}
 \gamma^\eps_t \in \Pi(\alpha_t^\eps, \mu_0), \;  \OT_\eps(\alpha_t^\eps, \mu_0)=\frac{1}{2} \int_{\R^d\times \R^d} \vert x-y \vert^2 \gamma_t^\eps (\dd x, \dd y) + \eps H(\gamma^\eps_t \vert \alpha^\eps_t \otimes \mu_0).
 \end{equation}
 Since $\gamma^\eps_t$ has second marginal $\mu_0$ supported on $B_{R_0}$, the conditional probability $\gamma^\eps_t(. \vert x)$ is supported on $B_{R_0}$, as well so that $\nabla v^\eps$ has controlled linear growth
 \begin{equation}
 \vert \nabla v^\eps_t(x) \vert \leq \vert x \vert + R_0, \; \forall x\in \R^d, \; \forall t.
 \end{equation}
 Therefore one defines weak solutions of \pref{sgeps1}-\pref{sgeps2}-\pref{sgeps3} in a similar way as we did for \pref{sg3}. A weak solution of  \pref{sgeps1}-\pref{sgeps2}-\pref{sgeps3} on $[0,T]\times \R^d$, is a continuous for $W_2$ curve of probability measures $t\in [0,T] \mapsto \alpha_t^\eps \in \Prob_c(\R^d)$ such that:
\begin{itemize}
\item  $\spt(\alpha_t^\eps) \subset B_{2 R_0 e^{\Vert A \Vert T}}$ for every $t\in [0,T]$, 
\item for every $f \in C^1([0,T]\times \R^d)$, one has 
\begin{multline}\label{sgweakeps1}
\int_0^T \int_{\R^d} [\partial_t f + Ax \cdot \nabla f]\alpha_t^\eps(\dd x) \dd t-\int_0^T \int_{\R^d\times \R^d} A y \cdot \nabla f(t,x) \gamma_t^\eps(\dd x, \dd y) \dd t\\
=  \int_{\R^d}  f(T,x) \alpha_T^\eps (\dd x)- \int_{\R^d} f(0, x) \alpha_0(\dd x),
\end{multline}
where  for a.e. $t\in [0,T]$, $\gamma_t^\eps$ is the solution of $\OT_\eps(\alpha_t^\eps, \mu_0)$ i.e. \pref{sgeps3} holds. 
 \end{itemize}

 \section{Well-posedness for fixed $\eps>0$}\label{sec-wp}
 
 To prove well-posedness of entropic semi-geostrophic equations, we will show that these equations enjoy nice Lipschitz properties (detailed in paragraph \ref{subsec-fixed} below) for which an elementary fixed-point argument can be used.

 \subsection{A fixed-point argument}\label{subsec-fixed}

 Consider a map $B$ : $\alpha \in \Prob_c(\R^d) \mapsto B[\alpha] \in C(\R^d, \R^d)$ with the following properties:
 
 \begin{itemize}
 
 \item {\bf{(H1)}} There exists $C>0$ such that 
 \begin{equation}\label{growthcond}
 \vert B[\alpha](x)\vert \leq C(1 + \vert x\vert), \; \forall (x, \alpha)\in \R^d \times \Prob_c(\R^d).
 \end{equation}
 
 \item  {\bf{(H2)}} For every $R>0$ 
 \begin{equation}\label{lipxcond}
 K_R:=\sup \{ \Lip(B[\alpha], B_R), \; \alpha \in \Prob(B_R)\}<+\infty.
 \end{equation}
 
 \item {\bf{(H3)}} For every $R>0$ 
 \begin{equation}\label{lipalphacond}
 M_R:=\sup \Big\{ \frac{ \Vert B[\alpha^1] -B[\alpha^2] \Vert_{L^\infty(B_R)}} {W_2(\alpha^1, \alpha^2)}, \;  (\alpha^1, \alpha^2) \in \Prob(B_R)^2, \; \alpha^1 \neq \alpha^2 \Big\}<+\infty.
 \end{equation}

 \end{itemize}

 \begin{lem}\label{lembasic}
 Let $B$ : $\alpha \in \Prob_c(\R^d) \mapsto B[\alpha] \in C(\R^d, \R^d)$ satisfy {\bf{(H1)}-\bf{(H2)}-\bf{(H3)}}, $R_0>0$, $\alpha_0 \in \Prob(B_{R_0})$ and $T>0$. Setting $R_T:=(R_0+1)e^{CT}$, there exists a unique curve $t\in [0,T] \mapsto \alpha_t \in \Prob(B_{R_T})$ which solves 
 \begin{equation}\label{transporteqB}
 \partial_t \alpha + \dive(\alpha B[\alpha] )=0, \;  \alpha(0,\cdot)=\alpha_0, 
 \end{equation}
 in the weak sense on $[0,T]\times \R^d$.
 \end{lem}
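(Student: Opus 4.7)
The plan is to set up a Banach fixed-point argument on a complete metric space of curves of probability measures. Let $R_T := (R_0+1)e^{CT}$ and let $\mathcal{X}$ be the space of $W_2$-continuous curves $\beta \colon [0,T] \to \Prob(B_{R_T})$ with $\beta_0 = \alpha_0$, endowed with the uniform distance $d_\infty(\beta,\tilde\beta) := \sup_{t\in[0,T]} W_2(\beta_t,\tilde\beta_t)$. Since $\Prob(B_{R_T})$ is a compact Polish space under $W_2$, $\mathcal{X}$ is complete. Given $\beta \in \mathcal{X}$, set $V_t(x) := B[\beta_t](x)$. By \textbf{(H3)} and the $W_2$-continuity of $\beta$, $t \mapsto V_t$ is continuous with values in $C(B_{R_T},\R^d)$; by \textbf{(H2)} each $V_t$ is $K_{R_T}$-Lipschitz on $B_{R_T}$; by \textbf{(H1)} it has linear growth with constant $C$. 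A non-autonomous Cauchy–Lipschitz argument thus yields a unique global flow $X_t(x)$ with $\dot X_t = V_t(X_t)$, $X_0 = \id$, and the Gronwall estimate $|X_t(x)| \leq (1+|x|)e^{Ct}-1$ shows that $\spt((X_t)_\#\alpha_0) \subset B_{R_T}$. Define $\Phi(\beta)_t := (X_t)_\# \alpha_0$; then $\Phi(\beta) \in \mathcal{X}$ and, by the standard equivalence between pushforwards by Lipschitz flows and distributional solutions of linear continuity equations, $\Phi(\beta)$ is the unique weak solution of $\partial_t \alpha + \dive(\alpha V) = 0$ supported in $B_{R_T}$.

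Next, I would show $\Phi$ is contractive in a suitably weighted metric. Given $\beta,\tilde\beta \in \mathcal{X}$ with flows $X_t, \tilde X_t$ and velocities $V_t, \tilde V_t$,
\begin{equation*}
|X_t(x) - \tilde X_t(x)| \leq \int_0^t |V_s(X_s) - V_s(\tilde X_s)|\,\dd s + \int_0^t |V_s(\tilde X_s) - \tilde V_s(\tilde X_s)|\,\dd s,
\end{equation*}
so \textbf{(H2)} and \textbf{(H3)} yield
\begin{equation*}
|X_t(x) - \tilde X_t(x)| \leq K_{R_T}\!\int_0^t |X_s - \tilde X_s|\,\dd s + M_{R_T}\!\int_0^t W_2(\beta_s,\tilde\beta_s)\,\dd s.
\end{equation*}
Gronwall's lemma gives $|X_t(x)-\tilde X_t(x)| \leq M_{R_T} e^{K_{R_T} T}\!\int_0^t W_2(\beta_s,\tilde\beta_s)\,\dd s$, uniformly in $x$. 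Using the coupling $(X_t,\tilde X_t)_\#\alpha_0 \in \Pi(\Phi(\beta)_t,\Phi(\tilde\beta)_t)$,
\begin{equation*}
W_2(\Phi(\beta)_t,\Phi(\tilde\beta)_t) \leq M_{R_T}\, e^{K_{R_T} T}\!\int_0^t W_2(\beta_s,\tilde\beta_s)\,\dd s.
\end{equation*}
Switching to the Bielecki-type metric $d_\lambda(\beta,\tilde\beta) := \sup_{t\in[0,T]} e^{-\lambda t} W_2(\beta_t,\tilde\beta_t)$, equivalent to $d_\infty$, we obtain $d_\lambda(\Phi(\beta),\Phi(\tilde\beta)) \leq (M_{R_T} e^{K_{R_T} T}/\lambda)\,d_\lambda(\beta,\tilde\beta)$. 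Choosing $\lambda > M_{R_T} e^{K_{R_T} T}$ makes $\Phi$ a strict contraction on $(\mathcal{X},d_\lambda)$, and Banach's theorem produces the unique fixed point $\alpha \in \mathcal{X}$.

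The main delicate point is the identification of fixed points of $\Phi$ with weak solutions of \eqref{transporteqB}. If $\alpha$ is any weak solution in $\mathcal{X}$, then the vector field $V_t(x) := B[\alpha_t](x)$ is continuous in $(t,x)$, Lipschitz in $x$ on $B_{R_T}$ and has linear growth, so the linear continuity equation $\partial_t\alpha + \dive(\alpha V)=0$ with initial datum $\alpha_0$ admits a unique solution given precisely by pushforward along the flow of $V$; this forces $\alpha = \Phi(\alpha)$. Everything else is soft: the $W_2$-continuity of $\Phi(\beta)$ follows from continuous dependence of $X_t$ on $t$, and the support bound is the Gronwall consequence of \textbf{(H1)}. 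Thus the fixed-point argument yields both existence and uniqueness in the class prescribed by the lemma.
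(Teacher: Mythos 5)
Your proof is correct and follows essentially the same strategy as the paper's: identify weak solutions of \eqref{transporteqB} with fixed points of the pushforward-along-flow map $\Phi$, estimate flow displacement via \textbf{(H2)}--\textbf{(H3)} and Gr\"onwall, and run Banach--Picard on the same Bielecki-weighted space $C([0,T],(\Prob(B_{R_T}),W_2))$. The only deviation is cosmetic: the paper differentiates $|X_t^1-X_t^2|^2$ in time, uses Young's inequality, and fixes $\lambda=\tfrac{5M}{2}+K$ to get the explicit contraction factor $\tfrac12$, whereas you run Gr\"onwall on the integral form of the ODE and take any $\lambda > M_{R_T}e^{K_{R_T}T}$; both give a strict contraction, so the conclusions coincide.
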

 \begin{proof}
 Set $K=K_{R_T}$, $M=M_{R_T}$ (see \pref{lipxcond} and \pref{lipalphacond}) and  
 \begin{equation}\label{choixlambda}
 \lambda:=\frac{5M}{2}+ K.
 \end{equation}
 Define 
 \[E_T:=C([0,T], (\Prob(B_{R_T}), W_2))\]
 which, equipped with the distance 
 \[\dist(\alpha^1, \alpha^2):=\sup_{t\in [0,T]} e^{-\lambda t} W_2(\alpha_t^1, \alpha_t^2), \; (\alpha^1, \alpha_2)=(\alpha_t^1, \alpha_t^2)_{t\in [0,T]} \in E_T^2\]
 is a complete metric space.  By well-known arguments on the continuity equation (see Chapter 8 in \cite{AGS}), our assumptions on $B$ imply that $\alpha \in E_T$ solves \eqref{transporteqB} if and only if
 \[\alpha =\Phi_{\alpha_0}(\alpha) \mbox{ with } \Phi_{\alpha_0}(\alpha)_t:= {X_t^{\alpha}}_\# \alpha_0, \; t\in [0,T]\]
 where $X_t^{\alpha}$ is the (globally well-defined) flow of $B[\alpha]$:
 \begin{equation}
 \frac{\dd} {\dd t} X_t^{\alpha}(x) = B[\alpha_t](X_t^{\alpha}(x)),
 X_0^{\alpha}(x)=x, \; (t,x)\in [0,T]\times \R^d.
 \end{equation}
 By Gr\"onwall's Lemma and \pref{growthcond} $\Phi_{\alpha_0}$ is a self map of $E_T$, we shall now prove that it is a contraction. Let $(\alpha^1, \alpha^2)\in E_T^2$, to shorten notation set $B^i (t,x)=B[\alpha_t^i](x)$, $i=1, 2$ and let $X_t^1$, $X_t^2$ denote the flows of $B^1$ and $B^2$ respectively. Then, we first obviously have
 \begin{equation}\label{ineqwf}
 W_2^2(\Phi_{\alpha_0}(\alpha^1)_t, \Phi_{\alpha_0}(\alpha^2)_t)=W_2^2({X_t^1}_{\#} \alpha_0, {X_t^2}_{\#} \alpha_0) \leq \int_{\R^d} \vert X_t^1(x)-X_t^2(x) \vert^2 \alpha_0(\dd x)
 \end{equation}
 Next we observe that for fixed $x\in B_{R_0}$, 
 \[\begin{split}
 \frac{\dd} {\dd t} \vert X_t^1(x)-X_t^2(x)\vert^2 = 2(X_t^1(x)-X_t^2(x))\cdot (B^1(t,X_t^1(x)) -B^2(t,X_t^2(x)))  \\
 = 2(X_t^1(x)-X_t^2(x))\cdot (B^1(t,X_t^1(x)) -B^1(t,X_t^2(x)))\\
 + 2(X_t^1(x)-X_t^2(x))\cdot (B^1(t,X_t^2(x)) -B^2(t,X_t^2(x)))\\
 \leq 2K \vert X_t^1(x)-X_t^2(x)\vert^2 + 2M W_2(\alpha_t^1, \alpha_t^2) \vert X_t^1(x)-X_t^2(x)\vert \\
 \leq (2K+M) \vert X_t^1(x)-X_t^2(x)\vert^2+ M \dist^2(\alpha^1, \alpha^2) e^{2\lambda t}
 \end{split}\]
 where we have used \pref{lipxcond}-\pref{lipalphacond}  and Cauchy-Schwarz for the first inequality and Young's inequality and the definition of $\dist$ in the last line. Thanks to Gr\"onwall's Lemma, we get by our choice of $\lambda$ in \pref{choixlambda}:
 \[  \vert X_t^1(x)-X_t^2(x)\vert^2 e^{-2\lambda t} \leq \frac{M \dist^2(\alpha^1, \alpha^2) }{2 \lambda - 2K-M}= \frac{\dist^2(\alpha^1, \alpha^2)}{4}.\]
 Together with \pref{ineqwf}, integrating the previous inequality with respect to $\alpha_0$ yields
 \[\dist^2(\Phi_{\alpha_0}(\alpha^1), \Phi_{\alpha_0}(\alpha^2))=\sup_{t\in [0, T]} e^{-2 \lambda t} W_2^2(\Phi_{\alpha_0}(\alpha^1)_t, \Phi_{\alpha_0}(\alpha^2)_t )\leq  \frac{\dist^2(\alpha^1, \alpha^2)}{4}\]
 which shows that $\Phi$ is $\frac{1}{2}$-Lipschitz hence a contraction  for $\dist$. The existence and uniqueness of a fixed-point follows from the Banach-Picard fixed-point Theorem which completes the proof. 
  \end{proof}
 \begin{rem}
 The above proof also straighforwardly implies the Lipschitz dependence of the solution of \eqref{transporteqB} with respect to the initial condition. Indeed, let $\alpha_0$ and $\tal_0$ be in $\Prob(B_{R_0})$ $t\in [0,T]\mapsto \alpha_t$ and $t\in [0,T]\mapsto \tal_t$ denote the solution of  \eqref{transporteqB}  with respective initial conditions $\alpha_0$ and $\tal_0$. Then, defining $K$, $M$, $\lambda$,  $\dist$, $\Phi_{\alpha_0}$ and $\Phi_{\tal_0}$ as in the proof of Lemma \ref{lembasic}, we have thanks to the triangle inequality and the fact that $\Phi_{\alpha_0}$ is $\frac{1}{2}$-Lipschitz
 \[ \begin{split}
 \dist(\alpha, \tal)&= \dist(\Phi_{\alpha_0} (\alpha), \Phi_{\tal_0}(\tal)) \leq  \dist(\Phi_{\alpha_0} (\alpha), \Phi_{\alpha_0}(\tal))+  \dist(\Phi_{\alpha_0} (\tal), \Phi_{\tal_0}(\tal))\\
 &\leq \frac{1}{2}  \dist(\alpha, \tal) + \dist(\Phi_{\alpha_0} (\tal), \Phi_{\tal_0}(\tal))
\end{split}\]
Hence 
 \[\dist(\alpha, \tal) \leq 2  \dist(\Phi_{\alpha_0} (\tal), \Phi_{\tal_0}(\tal))\leq  2 W_2(\alpha_0, \tal_0)  \sup_{t\in [0,T]} e^{-\lambda t} \Lip (X_t^{\tal}, B_{R_T}) \]
 but it is straightforward to deduce from $\bf{(H2)}$ and Gr\"onwall's Lemma, that $\Lip (X_t^{\tal}, B_{R_T})\leq e^{K t}$ and since $\lambda-K=\frac{5 M}{2} \geq 0$ we arrive at 
\begin{equation}\label{lipfloweps}
\dist(\alpha, \tal) \leq 2 W_2(\alpha_0, \tal_0).
\end{equation}
 \end{rem}

 \subsection{Well-posedness  of entropic semi-geostrophic equations}\label{subsec-wellposedness}

 The fact that \pref{sgeps1}-\pref{sgeps2}-\pref{sgeps3} is well-posed then directly follows from Lemma \ref{lembasic} and regularity properties of entropic OT: 
 
 \begin{thm}\label{wpentsg}
 Given $T>0$, $R_0>0$, $\alpha_0$ and $\mu_0$ in $\Prob(B_{R_0})$,  for every $\eps>0$, the system \eqref{sgeps1}-\eqref{sgeps2}-\eqref{sgeps3} admits a unique weak solution on $[0,T]\times \R^d$.
  \end{thm}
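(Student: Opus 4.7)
The plan is to apply Lemma \ref{lembasic} to the map $B[\alpha](x) := A\nabla v^\eps[\alpha](x)$, where $\nabla v^\eps[\alpha]$ is the extension to $\R^d$ of the gradient of the Schr\"odinger potential for $\OT_\eps(\alpha,\mu_0)$ given by \eqref{gradve}. A weak solution of \pref{transporteqB} for this choice of $B$ is, by construction, a weak solution of \pref{sgeps1}-\pref{sgeps2}-\pref{sgeps3}, so it suffices to check that $B$ satisfies the hypotheses \textbf{(H1)}, \textbf{(H2)} and \textbf{(H3)}.

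For \textbf{(H1)}, since $\gamma^\eps[\alpha](\cdot \vert x)$ is a probability measure supported on $\spt(\mu_0) \subset B_{R_0}$, the identity \eqref{gradve} yields $\vert \nabla v^\eps[\alpha](x) \vert \leq \vert x \vert + R_0$, and hence $\vert B[\alpha](x) \vert \leq \Vert A \Vert (1+R_0)(1+\vert x \vert)$, so \textbf{(H1)} holds with $C = \Vert A \Vert (1+R_0)$. For \textbf{(H2)}, the Hessian formula \eqref{hessve} reads $D^2 v^\eps[\alpha](x) = \id - \eps^{-1}\Sigma(x)$, where $\Sigma(x)$ is the covariance matrix of $\gamma^\eps(\cdot \vert x)$, a probability measure supported on $B_{R_0}$; therefore $\Vert \Sigma(x) \Vert \leq R_0^2$, giving a uniform bound $\Vert D^2 v^\eps[\alpha](x) \Vert \leq 1 + R_0^2/\eps$ on $\R^d$ independently of $\alpha$, so $\nabla v^\eps[\alpha]$ is globally Lipschitz with the same constant and \textbf{(H2)} holds with $K_R \leq \Vert A \Vert (1 + R_0^2/\eps)$ for every $R>0$. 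Condition \textbf{(H3)} is exactly the content of Proposition \ref{prop:stability}, which yields $M_R \leq \Vert A \Vert \Delta(\eps, R) < +\infty$.

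Lemma \ref{lembasic} then produces a unique curve $t \in [0,T] \mapsto \alpha_t^\eps \in \Prob(B_{R_T})$ solving the continuity equation \pref{sgeps1}. The slightly sharper support bound $\spt(\alpha_t^\eps) \subset B_{2 R_0 e^{\Vert A \Vert T}}$ required by the weak formulation follows at once from Gr\"onwall's lemma applied to the flow of $B[\alpha^\eps]$: the refined pointwise estimate $\vert B[\alpha^\eps_t](x) \vert \leq \Vert A \Vert (\vert x \vert + R_0)$ gives $\vert X_t^{\alpha^\eps}(x)\vert + R_0 \leq (\vert x \vert + R_0) e^{\Vert A \Vert t}$, whence the claimed bound for $x \in B_{R_0}$.

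The only nontrivial ingredient in this argument is the stability estimate of Proposition \ref{prop:stability}, which is where I expect the main conceptual difficulty to lie (it relies on the Hilbert projective metric contraction of the Sinkhorn iteration, as recorded in \cite{CCL}). Once this is granted, the required Lipschitz structure of the entropic velocity field is an immediate consequence of the closed-form expressions \eqref{gradve}-\eqref{hessve}, and the abstract fixed-point Lemma \ref{lembasic} does the rest; note in particular that the constants $K_R$ and $M_R$ blow up as $\eps \to 0^+$, which explains why the well-posedness obtained here is for fixed $\eps>0$ and the convergence analysis as $\eps \to 0^+$ requires a separate argument (deferred to Section \ref{sec-convergence}).
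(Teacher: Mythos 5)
Your proposal is correct and follows essentially the same route as the paper: both verify \textbf{(H1)}--\textbf{(H3)} for $B^\eps[\alpha] = A\nabla v^\eps[\alpha]$ using the support of $\gamma^\eps(\cdot\vert x)$ in $B_{R_0}$, the Hessian formula \eqref{hessve}, and Proposition \ref{prop:stability}, then invoke Lemma \ref{lembasic} and sharpen the support radius to $2R_0 e^{\Vert A\Vert T}$ by a direct Gr\"onwall estimate. The only differences are immaterial variations in the constants (e.g. your covariance bound $\Vert\Sigma(x)\Vert\le R_0^2$ versus the paper's cruder $2R_0^2$).
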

 
 \begin{proof}
 Given $\alpha\in \Prob_c(\R^d)$, denote by   $\gamma^\eps$ the solution of $\OT_{\eps}(\alpha, \mu_0)$ (equivalently solve the Schr\"odinger system  \pref{schro1}-\pref{schro2} to obtain the potential $v^\eps$) and set
 \begin{equation}\label{defBeps}
 B^\eps[\alpha](x):=A(x-\int_{\R^d} y \gamma^{\eps}(\dd y\vert x))=A(\nabla v^\eps(x)), \; \forall x\in \R^d.
 \end{equation}
Let us show that the map $\alpha \mapsto B^\eps[\alpha]$ satisfies {\bf{(H1)}}-{\bf{(H2)}}-{\bf{(H3)}}. Concerning {\bf{(H1)}}, the linear growth condition \pref{growthcond} follows from the fact that $\gamma^\eps(.\vert x)$ has support in $B_{R_0}$, so that $\vert B^\eps[\alpha](x) \vert \leq \Vert A \Vert (\vert x\vert+ R_0)$  hence {\bf{(H1)}} holds with the constant $C=\max(1, R_0) \Vert A\Vert$. As for {\bf{(H2)}}, this also follows from the fact that $\gamma^\eps(.\vert x)$ has support in $B_{R_0}$ and the expression of the hessian of $v^\eps$ in \pref{hessve} which yields the bound  (exploding as $\eps\to 0$) 
\[ \Vert D B[\alpha] \Vert \leq \Vert A \Vert \Vert D^2 v^\eps\Vert \leq \Vert A \Vert (1+ 2 \eps^{-1}R_0^2).\] 
Therefore {\bf{(H2)}} holds with the constant   $K_R= \Vert A \Vert (1+ 2 \eps^{-1}R_0^2)$ (depending on $\eps$ but not $R$). Finally, it follows from Proposition \ref{prop:stability} that $B^\eps$ satisfies  {\bf{(H3)}}  with the constant $M_R=\Lambda(\eps, R) \Vert A\Vert$, $\Lambda(\eps, R)$ being the constant in \eqref{displacementsmooth}. Thanks to Lemma \ref{lembasic}, we deduce that there exists a unique solution $\alpha^\eps$ of the system \eqref{sgeps1}-\eqref{sgeps2}-\eqref{sgeps3} with $\alpha_t^\eps$ supported for every $t\in [0,T]$ on the ball of radius $(R_0+1)e^{\max(1, R_0) \Vert A\Vert T}$ but a direct application of Gr\"onwall's Lemma gives that it has to be supported on the ball of radius $2 R_0 e^{\Vert A \Vert T}$ as in our initial definition in section \ref{sec-entreg}. 
\end{proof}
 
 \subsection{On conservation of energy}\label{entropicenergyconserved}
 
 In the physically relevant case $d=3$, $A=J$, one can define the (entropically regularized) total energy
\[E_\eps(\alpha):= \OT_\eps(\alpha, \mu_0) + \int_{\R^3} x_3 \alpha.\]
Of course, the potential energy i.e. the second term is preserved along the flow $\alpha^\eps$ of the entropic semi-geostrophic equation (obtained by Theorem \ref{wpentsg}), by taking a test function depending on $x_3$ only, we readily see that the marginal of $\alpha^\eps$ in the $x_3$-variable is constant in time.  As for the conservation of $t\mapsto \OT_\eps(\alpha_t^\eps, \mu_0)$, one can argue as follows.

\smallskip

 Using a well-known dual formulation (see \cite{Nutz22}, \cite{Leonard12}, \cite{Leonard14}) of entropic optimal transport (or by a direct computation using \eqref{schro1}-\eqref{schro2}), given $\beta \in \Prob_c(\R^3)$, one can also write $\OT_\eps(\beta, \mu_0)$ as the supremum of an unconstrained concave maximization problem
\[\eps+ \sup_{u, v} \left\{\int_{\R^3} u  \dd \mu_0 +\int_{\R^3} v  \dd \beta-\eps \int_{\R^3\times \R^3} e^{\frac{-\vert x-y\vert^2}{2\eps}} e^{\frac{v(x)+u(y)}{\eps}} \beta(\dd x) \mu_0(\dd y)\right\}\]
and this supremum is achieved when $u$ and $v$ are Schr\"odinger potentials between $\beta$ and $\mu_0$. In particular, if we denote by $v^\eps=v^\eps[\beta]$ and $u^\eps=u^\eps[\beta]$ these potentials, when $\eta$ is another compactly supported probability measure, we have
\begin{equation}\label{subgradoteps}
\begin{split}
\OT_\eps(\eta, \mu_0)-\OT_\eps(\beta, \mu_0)& \geq \int_{\R^3} v^\eps (\eta-\beta)\\
&-\eps \int_{\R^3}  \Big(\int_{\R^3} e^{\frac{-\vert x-y\vert^2}{2\eps}} e^{\frac{v^\eps(x)+u^\eps(y)}{\eps}} \mu_0(\dd y)\Big) (\eta-\beta)(\dd x)\\
&= \int_{\R^3} v^\eps (\eta-\beta)
\end{split}
\end{equation}
where passing to the last line, we have used the fact that $v^\eps$ and $u^\eps$ are Schr\"odinger potentials between $\beta$ and $\mu_0$ (so that \eqref{schro1} holds for $\mu=\mu_0$) and that $\eta$ and $\beta$ have same total mass. For $t$ and $s$ in $[0,T]$, applying \eqref{subgradoteps} to the measures $\alpha_s^\eps$, $\alpha_t^\eps$ and the potential $v_t^\eps=v^\eps[\alpha_t^\eps]$, we get, using the fact that $\alpha^\eps$ solves the semi entropic semi-geostrophic equation in the second line, the fact that $J$ is skew-symmetric in the third line, and finally the fact that $\beta \mapsto \nabla v^\eps[\beta]$ satisfies  {\bf{(H3)}} (thanks to  Proposition \ref{prop:stability}) and that $\tau \mapsto \alpha_\tau^\eps$ is Lipschitz for $W_2$ in the last line
\[\begin{split}
\OT_\eps(\alpha_s^\eps, \mu_0)&-\OT_\eps(\alpha_t^\eps, \mu_0)\geq \int_{\R^3} v_t^\eps (\alpha_s^\eps-\alpha_t^\eps)\\
&= \int_t^s  \int_{\R^3} \nabla v_t^\eps(x) J(\nabla v_\tau^\eps(x)) \alpha_\tau^\eps(\dd x) \dd \tau\\
&= \int_t^s  \int_{\R^3} \nabla v_t^\eps(x)  J(\nabla v_\tau^\eps(x)-\nabla v_t^\eps(x)) \alpha_\tau^\eps(\dd x) \dd \tau \\
&\geq - \vert s-t\vert \Vert \nabla v^\eps\Vert_{L^\infty(B_{2 R_0 e^{ T}})} \sup_{\tau \in [t,s]} \Vert \nabla v_\tau^\eps- \nabla v_t^\eps\Vert_{L^{\infty}(B_{2 R_0 e^{ T}})}\\
&\geq -C_\eps (s-t)^2
\end{split}\]
which, reversing the role of $s$ and $t$, yields $\OT_\eps(\alpha_s^\eps, \mu_0)-\OT_\eps(\alpha_t^\eps, \mu_0)=O((s-t)^2)$ hence the preservation of $\OT_\eps(., \mu_0)$ and $E_\eps$ along $\alpha^\eps$.

 \subsection{Discretization}\label{subsec-disc}
 
The fixed-point argument of paragraph \ref{subsec-fixed}  is useful to derive well-posedness for entropic approximations of semi-geostrophic equations as we saw in paragraph \ref{subsec-wellposedness}, it is however useless in practice to design numerical simulations. In this paragraph, we therefore aim to discuss the convergence of an explicit scheme for \eqref{transporteqB} which is in line with \cite{BCM23} and where:
\begin{itemize}
\item the initial condition is approximated by some empirical measure  (which makes pushforward operations explicit)

\item the flow of $B[\alpha]$ is discretized in time by its explicit Euler counterpart (for entropic semi-geostrophic equations and for a discrete $\alpha$, computing $B[\alpha]$ amounts to compute the Schr\"odinger potential between $\alpha$ and $\mu_0$ which can be done efficiently by Sinkhorn's algorithm as in \cite{BCM23}).

\end{itemize}

More precisely, given $\alpha_0 \in \Prob(B_{R_0})$ and $T>0$, we denote by $\alpha$ the solution of \eqref{transporteqB}  obtained as in Lemma \ref{lembasic}. Given $\tal_0 \in \Prob(B_{R_0})$ (which we can think of as a quantized version of $\alpha_0$), $N\in \N^*$ and $\tau=\frac{T}{N}$ a time step, we consider the sequence of measures $\alpha_k^\tau$, $k=0, \ldots, N-1$ defined recursively by
\begin{equation}\label{defalphaktau}
 \alpha_0^\tau=\tal_0, \; \alpha_{k+1}^\tau:=(\id + \tau B[\alpha_k^\tau])_\# \alpha_k^\tau, \; k=0, \ldots, N-1.
\end{equation}
Note that whenever $\tal_0$ is an empirical measure so is $\alpha_k^\tau$ and that $\alpha_k^\tau$ remains supported on $B_{R_T}$ with $R_T=(R_0+1)e^{CT}$ and $C$ is the constant appearing in \pref{growthcond}. Let us finally denote by $\tal^\tau$ the piecewise constant interpolation $t\in [0,T) \mapsto \tal_t^\tau$:
\begin{equation}
\tal^\tau_t= \alpha_k^\tau, \; t\in [k\tau, (k+1)\tau), \; k=0, \ldots, N-1.
\end{equation}
Then we have the following error estimate between the solution $\alpha$ of \eqref{transporteqB} and its discretized counterpart $\tal^\tau$:
\begin{lem}\label{lembasicEuler}
Assuming $B$ satisfies {\bf{(H1)}-\bf{(H2)}-\bf{(H3)}}, there is a positive constant $\Lambda$ depending on $T$, $R_T=(R_0+1)e^{CT}$, $C$, $K=K_{R_T}$ and $M=M_{R_T}$ such that, for $\alpha$ and $\tal^\tau$ as above, one has
\begin{equation}\label{errorestimatetau}
\sup_{t\in [0, T)} W_2(\alpha_t, \tal^\tau_t) \leq \Lambda (\tau + W_2(\alpha_0, \tal_0)).
\end{equation}
\end{lem}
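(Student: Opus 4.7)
The plan is to estimate $W_2(\alpha_{t_k}, \alpha_k^\tau)$ at the grid points $t_k = k\tau$ by a discrete Gr\"onwall argument based on a particle-by-particle coupling, and then pass to the full interval $[0,T)$ via the $W_2$-Lipschitz continuity of the continuous curve. Fix an optimal plan $\pi_0 \in \Pi(\alpha_0, \tal_0)$ and, for $(x,y)$ in its support, denote by $X_t(x)$ the trajectory of the time-dependent vector field $(t,z)\mapsto B[\alpha_t](z)$ starting at $x$ (which is globally well-defined thanks to \textbf{(H1)}--\textbf{(H2)}), and by $Y_k^\tau(y)$ the explicit Euler trajectory defined recursively by $Y_0^\tau(y)=y$ and $Y_{k+1}^\tau(y) = Y_k^\tau(y) + \tau B[\alpha_k^\tau](Y_k^\tau(y))$. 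A Gr\"onwall argument based on \textbf{(H1)} shows that both $X_{t_k}(x)$ and $Y_k^\tau(y)$ stay in $B_{R_T}$ whenever $(x,y)\in B_{R_0}^2$, so the constants $K=K_{R_T}$ and $M=M_{R_T}$ from \textbf{(H2)}--\textbf{(H3)} are available throughout the argument.

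Setting $\Delta_k(x,y) := X_{t_k}(x) - Y_k^\tau(y)$, the pushforward inequality gives $W_2(\alpha_{t_k}, \alpha_k^\tau) \leq \|\Delta_k\|_{L^2(\pi_0)}$ while $\|\Delta_0\|_{L^2(\pi_0)} = W_2(\alpha_0, \tal_0)$. The core step is the one-step identity
\[
\Delta_{k+1}(x,y) = \Delta_k(x,y) + \int_{t_k}^{t_{k+1}} \bigl( B[\alpha_s](X_s(x)) - B[\alpha_k^\tau](Y_k^\tau(y)) \bigr) \dd s,
\]
whose integrand I would decompose as a sum of three pieces: a consistency contribution $B[\alpha_s](X_s(x)) - B[\alpha_{t_k}](X_{t_k}(x))$, a measure-stability contribution $B[\alpha_{t_k}](X_{t_k}(x)) - B[\alpha_k^\tau](X_{t_k}(x))$, and a spatial contribution $B[\alpha_k^\tau](X_{t_k}(x)) - B[\alpha_k^\tau](Y_k^\tau(y))$. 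The second and third pieces are controlled by $\tau M \|\Delta_k\|_{L^2(\pi_0)}$ and $\tau K|\Delta_k(x,y)|$ via \textbf{(H3)} and \textbf{(H2)} respectively. For the first piece I would combine $|X_s(x) - X_{t_k}(x)|\leq C(1+R_T)\tau$ (from \textbf{(H1)}) with the $W_2$-Lipschitz continuity $W_2(\alpha_s, \alpha_{t_k}) \leq C(1+R_T)(s-t_k)$ of the continuous curve (a direct consequence of the continuity equation and \textbf{(H1)}); together with \textbf{(H2)}--\textbf{(H3)} this yields a $\tau^2$ bound for the integral, uniformly in $(x,y)$.

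Taking $L^2(\pi_0)$ norms and applying Minkowski's inequality then gives the recursion
\[
\|\Delta_{k+1}\|_{L^2(\pi_0)} \leq \bigl(1 + \tau(K+M)\bigr) \|\Delta_k\|_{L^2(\pi_0)} + C'\tau^2,
\]
and a discrete Gr\"onwall iteration produces
\[
\|\Delta_k\|_{L^2(\pi_0)} \leq e^{T(K+M)} W_2(\alpha_0, \tal_0) + \frac{C'}{K+M}\bigl(e^{T(K+M)} - 1\bigr)\tau.
\]
To transfer this estimate from grid points to all $t\in [0,T)$, I would use the triangle inequality $W_2(\alpha_t, \tal_t^\tau) \leq W_2(\alpha_t, \alpha_{t_k}) + W_2(\alpha_{t_k}, \alpha_k^\tau)$ for $t\in [k\tau, (k+1)\tau)$ and control the first summand by $C(1+R_T)\tau$ using the $W_2$-Lipschitz continuity of $t\mapsto \alpha_t$ once more. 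Collecting all constants into a single $\Lambda=\Lambda(T,R_T,C,K,M)$ yields the stated inequality. The only delicate point is the consistency estimate, which must combine the time-regularity of $s\mapsto \alpha_s$ (through the continuity equation bound) with the space- and measure-Lipschitz constants from \textbf{(H2)}--\textbf{(H3)}; the rest is a mechanical discrete Gr\"onwall loop.
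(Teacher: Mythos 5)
Your proposal is correct and rests on the same three ingredients as the paper's proof: the $W_2$-Lipschitz continuity in time of the exact curve (used both for the consistency term and for passing from grid points to all of $[0,T)$), a three-way decomposition of the one-step error into consistency, measure-stability via \textbf{(H3)}, and spatial Lipschitz via \textbf{(H2)}, and a discrete Gr\"onwall iteration. The only genuine difference is in implementation: you propagate a single optimal initial plan $\pi_0$ and track the $L^2(\pi_0)$ distance of paired particle trajectories $X_{t_k}(x)$ and $Y_k^\tau(y)$, whereas the paper works directly at the measure level, writing $\alpha_{(k+1)\tau}$ as a pushforward of $\alpha_{k\tau}$ and using the triangle inequality for $W_2$ with fresh (implicit) couplings at each step. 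Both yield the same $O(\tau)$ rate; the paper's variant avoids introducing particle trajectories explicitly and keeps the per-step couplings optimal, while yours has the small appeal of making the Lagrangian picture and the role of $\pi_0$ completely explicit. Either way the recursion $d_{k+1}\le (1+\tau(K+M))d_k + O(\tau^2)$ comes out the same, and your verification that both the exact flow and the Euler iterates remain in $B_{R_T}$ is exactly what is needed to invoke $K=K_{R_T}$ and $M=M_{R_T}$.
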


\begin{proof}
For notational simplicity, throughout this proof we just write $R=R_T$.
Setting $L:=\sup_{\beta\in \Prob(B_R)} \Vert B[\beta]\Vert_{L^\infty(B_R)}$, we first obviously have
\begin{equation}\label{ineg00}
 W_2(\alpha_t, \alpha_s) \leq L \vert t-s\vert, \; (s,t)\in [0,T]^2.
\end{equation}
Hence 
\begin{equation}\label{ineg01}
\sup_{t\in [0, T)} W_2(\alpha_t, \tal^\tau_t) \leq \max _{k=0, \ldots, N} W_2(\alpha_{k \tau}, \alpha_{k}^\tau)+  L\tau.
\end{equation}
For $k=0, \ldots, N$ set $d_k:=W_2(\alpha_{k\tau}, \alpha_{k}^\tau)$. For $k=0, \ldots, N-1$, notice that $\alpha_{(k+1)\tau}={X_k^\tau}_{\#} \alpha_{k\tau}$ where $X_k^\tau:=Y_{(k+1)\tau}$ and 
\[ \frac{d}{d s} Y_s(x)=B[\alpha_s](Y_s(x)), \; Y_{k\tau}(x)=x\]
so that
\[X_{k}^\tau(x)=x+ \tau B[\alpha_{k\tau}](x)+\int_{k\tau}^{(k+1)\tau} (B[\alpha_s](Y_s(x))-B[\alpha_{k\tau}](x)) \mbox{d} s\]
using {\bf{(H2)}-\bf{(H3)}} and the fact that $\vert Y_s(x)-x \vert \leq L (s-k\tau)$ for every $(x,s)\in B_R \times [k\tau, (k+1)\tau]$, we  arrive at
\begin{equation}\label{ineg02}
\Vert X_{k}^\tau-(\id+ \tau B[\alpha_{k\tau}])\Vert_{L^\infty(B_R)} \leq (K+M)L \tau^2.
\end{equation}
By the triangle inequality, we have
\[\begin{split}
d_{k+1}&=W_2({X_k^\tau}_{\#} \alpha_{k\tau}, (\id +\tau B[\alpha_k^\tau])_\# \alpha_k^\tau)\\
&\leq  W_2({X_k^\tau}_{\#} \alpha_{k\tau}, (\id+ \tau B[\alpha_{k\tau}])_\# \alpha_{k\tau}) \\
&+ W_2((\id+ \tau B[\alpha_{k\tau}])_\# \alpha_{k\tau}, (\id+ \tau B[\alpha_{k\tau}])_\# \alpha_{k}^{\tau})\\
&+  W_2( (\id+ \tau B[\alpha_{k\tau}])_\# \alpha_{k}^{\tau}, (\id+ \tau B[\alpha_{k}^{\tau}])_\# \alpha_{k}^{\tau}).
\end{split}\]
Firstly, \eqref{ineg02} yields
\[W_2({X_k^\tau}_{\#} \alpha_{k\tau}, (\id+ \tau B[\alpha_{k\tau}])_\# \alpha_{k\tau}) \leq \Vert X_{k}^\tau-(\id+ \tau B[\alpha_{k\tau}])\Vert_{L^\infty(B_R)} \leq (K+M)L \tau^2.\]
Secondly, assumption {\bf{(H2)}} entails 
\[\begin{split}
W_2((\id+ \tau B[\alpha_{k\tau}])_\# \alpha_{k\tau}, (\id+ \tau B[\alpha_{k\tau}])_\# \alpha_k^\tau)& \leq \Lip(\id + \tau B[\alpha_{k\tau}], B_R) W_2(\alpha_{k\tau}, \alpha_k^\tau)\\
&\leq (1+ K \tau) d_k. 
\end{split}\]
Finally, we deduce from {\bf{(H3)}}
\[W_2( (\id+ \tau B[\alpha_{k\tau}])_\# \alpha_k^\tau, (\id+ \tau B[\alpha_k^\tau])_\# \alpha_k^\tau) \leq  \tau \Vert  B[\alpha_{k\tau}] - B[\alpha_k^\tau] \Vert_{L^\infty(B_R)} \leq M \tau d_k \]
which enables us to conclude that
\[d_{k+1} \leq (K+M)L \tau^2 +(1+(M+K)\tau) d_k\]
which, by the discrete Gr\"onwall's Lemma, yields that for $k=0, \ldots, N$
\[d_k \leq (1+(K+M)\tau)^k (L \tau +d_0) \leq e^{(K+M) T}( L \tau +W_2(\alpha_0, \tal_0))\] 
together with \pref{ineg01}, this proves the desired inequality \pref{errorestimatetau}.
\end{proof}

\begin{rem}
  If we take $\tal_0 = \al_0$ in Lemma \ref{lembasicEuler}, the convergence of the theoretical Euler scheme is a well known fact, even in frameworks with much less regularity (see \cite{CavagnariSavare}). However, the interest of lemma \ref{lembasicEuler} is to give a convergence rate that is linear in $\ta$ which is natural to expect in the case of the first-order Euler scheme with enough regularity. Since \cite{CCL} provides bounds for  higher derivatives of the entropic potentials, the proof could also be adapted to higher order explicit schemes such as Runge-Kutta and provide better convergence rates.
\end{rem}

 \section{Convergence to a weak solution as $\eps \to 0^+$}\label{sec-convergence}
 
 \subsection{Convergence of continuous trajectories}\label{subsec-convcont}
 
 For fixed $T>0$ and $\eps>0$, let $\alpha^\eps \in C([0,T], (\Prob(B_{2R_0e^{\Vert A \Vert T}}), W_2))$ be the unique weak solution of  \pref{sgeps1}-\pref{sgeps2}-\pref{sgeps3} obtained as in Theorem \ref{wpentsg}. To simplify notation, let us set $R:=2R_0e^{\Vert A \Vert T}$.  
 
 \smallskip
 
 Since the velocity $A(\nabla v^\eps)$ can be bounded in $L^\infty(B_{R})$ uniformly in $\eps$, there is a constant $\kappa>0$ such that
 \[ W_2(\alpha_t^\eps, \alpha_s^\eps) \le \kappa \vert t-s\vert, \; \forall \eps>0, \; \forall (s,t)\in [0,T]^2.\]
 Thanks to Arz\`ela-Ascoli Theorem, passing to a vanishing sequence $\eps_n \to 0$ if necessary, one may assume that, for some $\alpha=(\alpha_t)_{t\in [0,T]} \in C([0,T], (\Prob(B_{R}), W_2))$, one has
 \begin{equation}\label{cval}
 \sup_{t\in [0,T]} W_2(\alpha_t^\eps, \alpha_t)\to 0 \mbox{ as $\eps\to 0^+$}.
 \end{equation}
 Our aim is to show that this cluster point $\alpha$ is a weak solution of the un-regularized semi-geostrophic like system \pref{sg3}. By the very definition of $\alpha^\eps$, for $f \in C^1([0,T]\times \R^d)$, one has 
\begin{multline}\label{sgweakeps11}
\int_0^T \int_{\R^d} [\partial_t f + Ax \cdot \nabla f]\alpha_t^\eps(\dd x) \dd t -\int_{\R^d}  f(T,x) \alpha_T^\eps (\dd x)+\int_{\R^d} f(0, x) \alpha_0(\dd x)
\\=\int_0^T \int_{\R^d\times \R^d} A y \cdot \nabla f(t,x) \gamma_t^\eps(\dd x, \dd y) \dd t
\end{multline}
where  for a.e. $t\in [0,T]$, $\gamma_t^\eps$ is the solution of $\OT_\eps(\alpha_t^\eps, \mu_0)$.  Obviously the left-hand side of \pref{sgweakeps11} converges as $\eps\to 0^+$ to
\[\int_0^T \int_{\R^d} [\partial_t f + Ax \cdot \nabla f]\alpha_t(\dd x) \dd t -\int_{\R^d}  f(T,x) \alpha_T (\dd x)+\int_{\R^d} f(0, x) \alpha_0(\dd x)\]
As for the right-hand side, by Banach-Alaoglu's Theorem, one may assume that the family of measures $\gamma_t^\eps(\dd x, \dd y)  \otimes  \dd t$ weakly $*$ converges as $\eps\to 0^+$ to some measure $\theta$ on $B_{R}^2 \times [0,T]$, since   weak $*$ convergence implies weak $*$ convergence of marginals and $\gamma_t^\eps \in \Pi(\alpha_t^\eps, \mu_0)$ for a.e. $t\in [0,T]$, one has
\[\theta(\dd x, \dd y, \dd t)= \gamma_t(\dd x, \dd y)  \otimes  \dd t \mbox{ with $\gamma_t\in \Pi(\alpha_t, \mu_0)$ for a.e. $t\in [0,T]$} \]
 and then 
 \[\begin{split}
\int_0^T \int_{\R^d} [\partial_t f + Ax \cdot \nabla f]\alpha_t(\dd x) \dd t -\int_{\R^d}  f(T,x) \alpha_T (\dd x)+\int_{\R^d} f(0, x) \alpha_0(\dd x)
\\=\int_0^T \int_{\R^d\times \R^d} A y \cdot \nabla f(t,x) \gamma_t (\dd x, \dd y) \dd t.
\end{split}\]
So, to establish that $\alpha$ is a weak solution of  \pref{sg3}, it remains to show that $\gamma_t$ is an optimal transport plan between $\alpha_t$ and $\mu_0$  for a.e. $t\in [0,T]$. Since $\al_t^\eps$ is supported on  $B_{R}$ for all $t \in [0,T]$, by Proposition \ref{prop:uniform},  $\int_{\R^d\times\R^d} |x-y|^2\ga_t^\eps(\dd x, \dd y) - W_2^2(\al_t^\eps,\mu_0)$ converges to $0$ uniformly in $t\in [0,T]$. Thanks to \eqref{cval}, $W_2^2(\al_t^\eps,\mu_0) \to W_2^2(\al_t,\mu_0)$ also uniformly in $t$, so that 
\[\begin{split}
\int_0^T W_2^2(\al_t,\mu_0) \dd t&=\lim_{\eps \to 0} \int_0^T\int_{\R^d\times\R^d} |x-y|^2\ga_t^\eps(\dd x, \dd y) \dd t \\
&= \int_0^T\int_{\R^d\times\R^d} |x-y|^2\gamma_t(\dd x, \dd y) \dd t\end{split} \]
where the second line follows from  the weak $*$ convergence of $\ga^\eps_t (\dd x, \dd y) \otimes \dd t$ to $\gamma_t (\dd x, \dd y) \otimes \dd t$,
and then since $\gamma_t \in \Pi(\alpha_t, \mu_0)$,  for a.e. $t\in [0,T]$, this shows that \pref{sgweak2} holds so that  $\alpha$ is a weak solution of  \pref{sg3}.  In other words, we have shown the following:  \begin{thm}\label{cvepszero}
 Cluster points as $\eps \to 0$ of weak solutions of  \eqref{sgeps1}-\eqref{sgeps2}-\eqref{sgeps3} are weak solutions of \eqref{sg3}.  
 \end{thm}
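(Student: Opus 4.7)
The plan is to extract a uniformly convergent subsequence of the entropic solutions $\alpha^\eps$ via Arz\`ela-Ascoli, pair it with a weak-$*$ limit of the entropic plans $\gamma_t^\eps$ obtained from Banach-Alaoglu, and then pass to the limit in the weak formulation \eqref{sgweakeps1}. The only non-routine step will be identifying the limit plan $\gamma_t$ as an optimal plan for $W_2^2(\alpha_t, \mu_0)$ for a.e.\ $t$, which will rely on the uniform convergence of entropic to unregularized optimal transport recorded in Proposition \ref{prop:uniform}.

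First, I would establish uniform-in-$\eps$ equicontinuity of $t \mapsto \alpha_t^\eps$. Formula \eqref{gradve} and the fact that $\gamma_t^\eps$ has second marginal $\mu_0 \in \Prob(B_{R_0})$ give $|\nabla v_t^\eps(x)| \le |x|+R_0$, so the velocity $A \nabla v^\eps$ is bounded uniformly in $L^\infty(B_R)$ with $R := 2R_0 e^{\|A\|T}$. This yields a Lipschitz constant for $t \mapsto \alpha_t^\eps$ in $W_2$ independent of $\eps$, and since $(\Prob(B_R), W_2)$ is compact, Arz\`ela-Ascoli provides a subsequence $\eps_n \to 0^+$ and a continuous limit $\alpha$ with the uniform convergence \eqref{cval}. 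For the plans, $\gamma_t^\eps(\dd x, \dd y) \otimes \dd t$ is a finite nonnegative measure with fixed total mass $T$ supported on the compact set $B_R \times B_{R_0} \times [0,T]$, so Banach-Alaoglu gives a weak-$*$ limit $\theta$ along a further subsequence. Continuity of marginal projections under weak-$*$ convergence, combined with the uniform $W_2$-convergence $\alpha_t^\eps \to \alpha_t$ and the fact that the second marginal is fixed equal to $\mu_0$, identifies $\theta = \gamma_t(\dd x, \dd y) \otimes \dd t$ with $\gamma_t \in \Pi(\alpha_t, \mu_0)$ for a.e.\ $t$.

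Passage to the limit in \eqref{sgweakeps1} is then immediate: the left-hand side terms converge by the uniform $W_2$-convergence of $\alpha_t^\eps$, and the velocity integral on the right converges by weak-$*$ convergence against the continuous bounded test function $(t,x,y) \mapsto Ay \cdot \nabla f(t,x)$ on $[0,T] \times B_R \times B_{R_0}$. The main obstacle is to verify the optimality condition \eqref{sgweak2}. Here Proposition \ref{prop:uniform} is decisive: it gives
\[ \sup_{t\in [0,T]} \Bigl| \int_{\R^d \times \R^d} |x-y|^2 \gamma_t^\eps(\dd x, \dd y) - W_2^2(\alpha_t^\eps, \mu_0) \Bigr| \longrightarrow 0. \]
Combined with the 1-Lipschitz dependence of $\beta \mapsto W_2(\beta, \mu_0)$ (which promotes \eqref{cval} to uniform convergence of $W_2^2(\alpha_t^\eps, \mu_0)$ to $W_2^2(\alpha_t, \mu_0)$), and with the weak-$*$ convergence of $\gamma^\eps_t(\dd x, \dd y) \otimes \dd t$ tested against the continuous function $(x,y) \mapsto |x-y|^2$, integrating in $t$ yields
\[ \int_0^T W_2^2(\alpha_t, \mu_0) \dd t = \int_0^T \int_{\R^d \times \R^d} |x-y|^2 \gamma_t(\dd x, \dd y) \dd t. \]
Since the reverse pointwise inequality $W_2^2(\alpha_t, \mu_0) \le \int |x-y|^2 \gamma_t(\dd x, \dd y)$ holds automatically from $\gamma_t \in \Pi(\alpha_t, \mu_0)$, equality must hold for a.e.\ $t$, which gives \eqref{sgweak2} and concludes the identification of $\alpha$ as a weak solution of \eqref{sg3}.
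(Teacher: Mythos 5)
Your proposal is correct and follows essentially the same route as the paper: Arz\`ela-Ascoli for the uniform-in-$t$ subsequential limit of $\alpha^\eps$, Banach-Alaoglu for the weak-$*$ limit of $\gamma_t^\eps(\dd x,\dd y)\otimes\dd t$, identification of the marginals, and then Proposition \ref{prop:uniform} combined with the integrated comparison $\int_0^T W_2^2(\alpha_t,\mu_0)\,\dd t = \int_0^T\int|x-y|^2\,\dd\gamma_t\,\dd t$ and the pointwise inequality $W_2^2(\alpha_t,\mu_0)\le\int|x-y|^2\,\dd\gamma_t$ to conclude optimality of $\gamma_t$ for a.e.\ $t$. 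The only cosmetic difference is that you spell out the ``equality a.e.\ from integral equality plus pointwise inequality'' step, which the paper leaves implicit.
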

 
 \begin{rem}
 Note that this  gives yet another proof of existence of weak solutions of semi-geostrophic equations for compactly supported initial data by entropic approximation.  When $d=3$ and $A=J$, combining the considerations of paragraph \ref{entropicenergyconserved} with the  convergence of $\OT_\eps$ to $W_2$, one can readily check that a cluster point $\alpha$ obtained as above, conserves the energy $W_2(\alpha_t, \mu_0)$ (as well as the potential energy of course). This property is actually already well-known for all weak solutions, see \cite{AmbrosioGangbo}. 
 \end{rem}

 \subsection{Convergence of discretizations}

We finally consider the case where we both regularize the semi-geostrophic  equation with an entropy i.e. replace $W^2_2$ by $\OT_\eps$ with small $\eps$ and approximate the solution of \pref{sgeps1}-\pref{sgeps2}-\pref{sgeps3} by using an explicit Euler scheme with time step $\tau$ and quantizing the initial conditon by replacing $\alpha_0$ by some discrete measure $\tal_0$ as in paragraph \ref{subsec-disc}. Combining  Lemma \ref{errorestimatetau} and Theorem \ref{cvepszero}, one easily gets that when letting first $\tau+ W_2(\alpha_0, \tal_0)$ tend to $0$ and then $\eps\to 0$, we obtain (along a subsequence) convergence to a weak solution of  \pref{sg3}. This is of little use in practice to guarantee convergence of the scheme proposed in \cite{BCM23}, so we consider the situation where we let all parameters vanish at the same time in an arbitrary manner, i.e. for $\eps>0$, we consider a time step $\tau_\eps>0$ with $T= N_\eps \tau_\eps$ and a quantized approximation of  the initial $\alpha_0\in \Prob(B_{R_0})$ and of the reference measure $\mu_0\in \Prob(B_{R_0})$
\[\tal_0^\eps:=\frac{1}{M_\eps} \sum_{i=1}^{M_\eps} \delta_{x_i^\eps},\; x_i^\eps \in B_{R_0}; \quad \tmu_0^\eps:=\frac{1}{M_\eps} \sum_{i=1}^{M_\eps} \delta_{y_i^\eps},\; y_i^\eps \in B_{R_0},\]
 and assume that
 \begin{equation}\label{wpr}
 \tau_\eps + W_2(\alpha_0, \tal_0^\eps) + W_2(\mu_0, \tmu_0^\eps)\to 0, \mbox{ as } \eps \to 0^+.
 \end{equation}
 We then construct a piecewise constant curve of measures $t\in [0, T] \mapsto \tal_t^\eps$ by an explicit Euler scheme as in \ref{subsec-disc}, i.e.:
\[\tal_t^\eps=\alpha_k^\eps, \; t\in [k \tau_\eps, (k+1) \tau_\eps), \; k=0, \ldots, N_\eps-1\]
with
\[\alpha_0^\eps=\tal_0^\eps, \; \alpha_{k+1}^\eps =(\id+ \tau_\eps B^\eps[\alpha_k^\eps])_\# \alpha_k^\eps, \; k= 0,\ldots, N_\eps-1\]
 where $B^\eps$ is defined  through the solution of $\OT_\eps(\alpha, \tmu_0^\eps)$ as in \eqref{defBeps}. Observing that $W_2(\tal_t^\eps, \tal_s^\eps) \leq  \kappa (\vert t-s\vert+\tau)$ for every $t,s$ in $[0,T]$ and a constant $\kappa$ independent of $\eps$, passing along a (not relabeled) vanishing sequence $\eps_n\to 0$, we may assume that for some $\alpha=(\alpha_t)_{t\in [0,T]} \in C([0,T], (\Prob(B_{R}), W_2))$ (where we recall that we have set $R:=2 R_0 e^{\Vert A\Vert T}$) one has
 \begin{equation}\label{cvtaleps}
 \sup_{t\in [0,T)} W_2(\tal_t^\eps, \alpha_t)\to 0 \mbox{ as $\eps\to 0^+$},
 \end{equation}
 and  $W_2(\alpha_t, \alpha_s) \leq \kappa \vert t-s\vert$, for all $s$ and $t$ in $[0,T]$. We then have the following convergence result, which shows that cluster points of the previous approximations are weak solutions of \pref{sg3}:

 \begin{thm}\label{cvtepszero}
 If $\alpha$ is obtained as a cluster point of the discretized entropic regularization $(\tal_t^\eps)_{t\in [0,T)}$ i.e. such that  \eqref{cvtaleps} holds, then $\alpha$ is a weak solution of \eqref{sg3}.  
 \end{thm}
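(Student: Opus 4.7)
The plan is to mimic the continuous-time argument of Theorem \ref{cvepszero}, but with the weak formulation extracted directly from the Euler update rather than from a continuous entropic PDE. Concretely, for each test function $f\in C^1([0,T]\times\R^d)$, I will establish a discrete identity that converges to \eqref{sgweak1} as $\eps,\tau_\eps\to 0^+$, extract a weak$^*$ cluster point of the time-assembled entropic plans, and then use Proposition \ref{prop:uniform} together with the $W_2$-continuity of $\alpha$ and of $(\alpha,\mu)\mapsto W_2^2(\alpha,\mu)$ to identify this cluster point as an optimal plan between $\alpha_t$ and $\mu_0$ for a.e.\ $t$.

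Write $t_k:=k\tau_\eps$, let $\tgam_k^\eps\in\Pi(\alpha_k^\eps,\tmu_0^\eps)$ denote the entropic plan used to build $B^\eps[\alpha_k^\eps]$ via \eqref{defBeps}, and assemble $\Gamma^\eps_t:=\tgam_k^\eps$ for $t\in[t_k,t_{k+1})$. Pairing $f(t_{k+1},\cdot)$ with $\alpha_{k+1}^\eps=(\id+\tau_\eps B^\eps[\alpha_k^\eps])_\#\alpha_k^\eps$ and applying the fundamental theorem of calculus along the segment from $(t_k,x)$ to $(t_{k+1},x+\tau_\eps B^\eps[\alpha_k^\eps](x))$, together with the uniform bound $|B^\eps[\alpha_k^\eps](x)|\le \|A\|(|x|+R_0)$ on supports and uniform continuity of $\partial_t f$ and $\nabla f$ on $[0,T]\times B_R$, yields
\[
\int f(t_{k+1})\alpha_{k+1}^\eps - \int f(t_k)\alpha_k^\eps = \tau_\eps\int\bigl[\partial_t f(t_k,x)+B^\eps[\alpha_k^\eps](x)\cdot\nabla f(t_k,x)\bigr]\alpha_k^\eps(\dd x) + \tau_\eps\,\omega(\tau_\eps),
\]
with $\omega(\tau_\eps)\to 0$ uniformly in $k$. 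Telescoping, decomposing $B^\eps[\alpha_k^\eps](x)=Ax-A\int y\,\tgam_k^\eps(\dd y|x)$, and converting the Riemann sums into integrals against $\tal^\eps_t$ and $\Gamma^\eps_t$ (at an extra cost $O(\omega(\tau_\eps))$ from replacing $f(t_k,\cdot)$ by $f(t,\cdot)$) produces the discrete analogue of \eqref{sgweakeps11} up to an error $o_\eps(1)$. Passing to the limit, the $\partial_t f+Ax\cdot\nabla f$ contribution converges thanks to \eqref{cvtaleps} and $W_2(\alpha_0,\tal_0^\eps)\to 0$; for the nonlinear term, Banach--Alaoglu on the uniformly compactly supported family $\Gamma^\eps_t(\dd x,\dd y)\otimes\dd t$ supplies a weak$^*$ cluster point, and marginal convergence using $\tmu_0^\eps\to\mu_0$ in $W_2$ identifies it as $\gamma_t(\dd x,\dd y)\otimes\dd t$ with $\gamma_t\in\Pi(\alpha_t,\mu_0)$ for a.e.\ $t$.

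It remains to show $\gamma_t$ is optimal for a.e.\ $t$. Proposition \ref{prop:uniform} applied with radius $R_1:=\max(R,R_0)$ gives $\iint|x-y|^2\Gamma^\eps_t-W_2^2(\tal^\eps_t,\tmu_0^\eps)\to 0$ uniformly in $t$; the $W_2$-continuity of $(\alpha,\mu)\mapsto W_2^2(\alpha,\mu)$ combined with $\tal^\eps_t\to\alpha_t$ uniformly in $t$ and $\tmu_0^\eps\to\mu_0$ then yields $\int_0^T\iint|x-y|^2\Gamma^\eps_t\,\dd t\to\int_0^T W_2^2(\alpha_t,\mu_0)\,\dd t$. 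On the other hand, weak$^*$ convergence tested against the continuous bounded function $|x-y|^2$ on the fixed compact $B_R\times B_{R_0}\times[0,T]$ gives $\int_0^T\iint|x-y|^2\Gamma^\eps_t\,\dd t\to\int_0^T\iint|x-y|^2\gamma_t\,\dd t$. Since $\gamma_t\in\Pi(\alpha_t,\mu_0)$, the integrand $\iint|x-y|^2\gamma_t-W_2^2(\alpha_t,\mu_0)$ is pointwise nonnegative and integrates to zero, so it vanishes for a.e.\ $t$, which is exactly \eqref{sgweak2}. The main technical obstacle is controlling the Taylor remainders, the Riemann-sum discrepancies, and the marginal mismatches (between $\tmu_0^\eps$ and $\mu_0$ and between $\alpha_k^\eps$ and $\alpha_{t_k}$) \emph{uniformly} as $\eps$ and $\tau_\eps$ go to zero independently; fortunately, the uniform linear growth of $B^\eps$ and the uniform boundedness of supports inherited from $\mu_0,\tmu_0^\eps\in\Prob(B_{R_0})$ reduce all of these to moduli of continuity of $f$ on a fixed compact set.
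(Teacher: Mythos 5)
Your proposal is correct and follows essentially the same route as the paper's proof: telescope the Euler updates against the test function, Taylor-expand to pass from the discrete pushforward to an integral against the conditional plan, assemble the plans into a piecewise-constant family in $t$, extract a weak$^*$ cluster point of the form $\gamma_t(\dd x,\dd y)\otimes\dd t$, and invoke Proposition~\ref{prop:uniform} together with the uniform $W_2$-convergence of the curve and of $\tmu_0^\eps$ to conclude that $\gamma_t$ is optimal for a.e.\ $t$. The only cosmetic differences are that you telescope $\int f(t_{k+1})\,\alpha_{k+1}^\eps-\int f(t_k)\,\alpha_k^\eps$ directly rather than Abel-summing $\int_0^T\int\partial_t f\,\tal_t^\eps$, and you spell out the pointwise-nonnegativity argument for a.e.\ optimality, which the paper delegates to the identical step in Section~\ref{subsec-convcont}.
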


 \begin{proof}
 Let $f\in C^1([0,T]\times \R^d)$,  observe that 
 \begin{equation}\label{ff1}
 \begin{split}
 \int_0^T \int_{\R^d} \partial_t f \tal_t^\eps&= \sum_{k=0}^{N_\eps-1}   \int_{\R^d}  \int_{k\tau_\eps }^{(k+1)\tau_\eps}  \partial_t f \alpha_k^\eps= \sum_{k=0}^{N_\eps-1} \int_{\R^d} (f((k+1)\tau_\eps,\cdot)-f(k\tau_\eps, \cdot)) \alpha_k^\eps  \\
& =\sum_{k=1}^{N_\eps-1} \int_{\R^d} f(k\tau_\eps,\cdot) (\alpha_{k-1}^\eps-\alpha_k^\eps)+ \int_{\R^d} f(T,\cdot) \alpha_{N-1}^\eps -\int_{\R^d} f(0,\cdot) \alpha_{0}^\eps
 \end{split}
 \end{equation}
 note that the last two terms in \eqref{ff1} converge  as $\eps\to 0$ respectively to  $\int_{\R^d} f(T, \cdot) \alpha_T$ and $-\int_{\R^d} f(0, \cdot) \alpha_{0}$. Setting $B_k^\eps=B^\eps[\alpha_k^\eps]$ and denoting by $\gamma^\eps_{k-1}$ the solution of $\OT_\eps(\alpha_{k-1}^\eps, \tmu^\eps_0)$ and using the fact that $\alpha_k^\eps=(\id+\tau_\eps B_k^\eps)_\# \alpha_{k-1}^\eps$, enables to rewrite
 \[\begin{split} \int_{\R^d} f(k\tau_\eps,\cdot) (\alpha_{k-1}^\eps-\alpha_k^\eps)&= \int_{\R^d} (f(k\tau_\eps,x)-f(k \tau_\eps, x+ \tau_\eps B_{k-1}^\eps(x)) ) \alpha_{k-1}^\eps(\dd x)\\
 &=-\tau_\eps \int_{\R^d} \nabla f(k\tau_\eps,x) \cdot B_{k-1}^\eps(x) \alpha_{k-1}^\eps(\dd x)+ o(\tau_\eps)\\
 &=\int_{ (k-1)\tau_\eps}^{k\tau_\eps} \int_{\R^d\times \R^d} \nabla f(t,x) \cdot A(y-x) \gamma_{k-1}^\eps(\dd x, \dd y) \dd t+ o(\tau_\eps).
 \end{split}\]
 Considering the piecewise constant curve of plans $t\mapsto \tgam^\eps_t$ defined by $\tgam^\eps_t=\gamma_{k}^\eps$ for $t\in [k\tau_\eps, (k+1)\tau_\eps)$, recalling that $N_\eps \tau_\eps=T$, we thus have
 \begin{equation}\label{ff2}
 \begin{split}
 \int_0^T \int_{\R^d} \partial_t f \tal_t^\eps=\int_0^T \int_{\R^d\times \R^d} \nabla f(t,x) \cdot A(y-x) \tgam^\eps_t(\dd x, \dd y) \dd t\\
+\int_{\R^d} f(T,x) \alpha_T(\dd x)-\int_{\R^d} f(0,x) \alpha_{0}(\dd x) +o(1).
 \end{split}
 \end{equation}
 As in paragraph \ref{subsec-convcont}, we may assume (possibly after an extraction) that $\tgam_t^\eps(\dd x, \dd y)  \otimes  \dd t$ weakly $*$ converge as $\eps\to 0^+$ to some measure  of the form $\gamma_t(\dd x, \dd y)  \otimes  \dd t$. Then, exactly as in paragraph \ref{subsec-convcont}, Proposition \ref{prop:uniform} ensures that for a.e. $t$, $\gamma_t \in \Pi(\alpha_t, \mu_0)$ is an optimal transport plan between $\alpha_t$ and $\mu_0$ i.e. satisfies \eqref{sgweak2}. Letting $\eps\to 0^+$ in \eqref{ff2}, thanks to \eqref{cvtaleps} and $\gamma_t \in \Pi(\alpha_t, \mu_0)$, we get
 \[\begin{split}
 \int_0^T \int_{\R^d} [\partial_t f   + A x\cdot \nabla f]  \alpha_t=\int_0^T \int_{\R^d\times \R^d} \nabla f(t,x) \cdot A y \; \gamma_t(\dd x, \dd y) \dd t\\
+\int_{\R^d} f(T,x) \alpha_T(\dd x)-\int_{\R^d} f(0,x) \alpha_{0}(\dd x)
 \end{split}\]
and since $\gamma_t$ is an optimal transport plan between $\alpha_t$ and $\mu_0$, $\alpha$ is a weak solution of \eqref{sg3}.
 \end{proof}

\bigskip
  
{\bf Acknowledgments:}   G.C. acknowledges the support of the Lagrange Mathematics and Computing Research Center. The authors wish to thank Jean-David Benamou and David Bourne for helpful comments and suggestions.

\bibliographystyle{plain}

\bibliography{bibli}

\end{document}